\definecolor{hot}{RGB}{65,105,225}
\theoremstyle{plain}
\newtheorem{theorem}{Theorem}[section]
\newtheorem{proposition}[theorem]{Proposition}
\newtheorem{lemma}[theorem]{Lemma}
\theoremstyle{definition}
\newtheorem{definition}[theorem]{Definition}
\newtheorem{remark}[theorem]{Remark}
\newtheorem{example}[theorem]{Example}
\newtheorem*{ex*}{Example}
\numberwithin{equation}{section}
\newcommand\sS{\mathscr{S}}
\newcommand\e{{\varepsilon}}
\newcommand\m{{\setminus}}
\DeclareMathOperator{\reg}{reg}                  
\DeclareMathOperator{\Eu}{\mathrm{Eu}}
\newcommand{\Sing}{{\mathrm{Sing}}}
\DeclareMathOperator{\mult}{mult}  
\newcommand{\lk}{{\mathbb C \rm{lk}}}
\DeclareMathOperator{\grad}{grad}
\DeclareMathOperator{\ord}{ord}
\newcommand{\fin}{\hspace*{\fill}$\square$\vspace*{2mm}}
\renewcommand{\d}{{\mathrm{d}}}
\def\bP{\mathbb{P}}
\def\bC{\mathbb{C}}
\def\sS{\mathscr{S}}
\def\bZ{\mathbb{Z}}
\title[Morse numbers of function germs]{Morse numbers of function germs with isolated singularities}
\author{Lauren\c{t}iu Maxim}
\address{L. Maxim: Department of Mathematics,  University of Wisconsin-Madison,  480 Lincoln Drive, Madison WI 53706-1388, USA.}
\email {maxim@math.wisc.edu}
\author[M. Tib\u{a}r]{Mihai Tib\u{a}r}
\address{M. Tib\u{a}r: Universit\' e de  Lille, CNRS, UMR 8524 -- Laboratoire Paul Painlev\'e, F-59000 Lille, France}  
\email {mihai-marius.tibar@univ-lille.fr}
\keywords{Milnor number, Morse functions, isolated singularities, vanishing homology}
\subjclass[2010]{32S30, 14C17, 32S60,  32S50, 32S55, 14D06, 32S05}
\thanks{L. Maxim was partially supported by the Simons Foundation (Collaboration Grants for Mathematicians \#567077), CNRS and the MPIM-Bonn. M. Tib\u{a}r acknowledges partial support from the Labex CEMPI grant (ANR-11-LABX-0007-01) and from the GDR du CNRS ``Singularit\' es et applications''.}
\date{\today}
\begin{document}

\begin{abstract}  
A set of  Morse numbers is associated to a holomorphic function germ with stratified isolated singularity, extending  the classical Milnor number to the setting of a singular base space. 
\end{abstract}

\maketitle

\section{Introduction}\label{intro}

The topology of holomorphic function germs $g\colon (\bC^{n}, 0) \to (\bC, 0)$, $n\ge 2$, with an isolated singularity
has been studied beginning with Milnor's fundamental lecture notes \cite{Mi} in the late 1960s.  Milnor showed at that time the existence of a locally trivial fibration
\begin{equation}\label{eq:milnorfib}
  g_{|} : B_{\e}\cap g^{-1}(D_{\delta}^{*}) \to D_{\delta}^{*}
\end{equation}
where $B_{\e}\subset \bC^{n}$ is the open ball at 0 of radius $\e>0$ and $D_{\delta}^{*}\subset \bC$ is the punctured open disk at 0 of radius $\delta>0$, and where one first chooses $\e$ small enough such that all spheres of smaller radii are transversal to $g^{-1}(0)$ and then $0<\delta \ll \e$ such that all fibres of $g$ over $D_{\delta}^{*}$ are transversal to $\partial B_{\e}$. Milnor showed that the fibration \eqref{eq:milnorfib} is independent on the choices of $\e$ and $\delta$ up to isotopy, and that its 
fibre (called \emph{Milnor fibre}) is homotopy equivalent to a bouquet of spheres $S^{n-1}$. The number $\mu(g)$ of these spheres is a local invariant, known as the \emph{Milnor number}, and has several other avatars:

\begin{enumerate}
\item the codimension of the Jacobian ideal, i.e. $\dim_{\bC} {\bC\{x_{1}, \ldots , x_{n}\} }/({\frac{\partial g}{\partial x_{1}}, \ldots , \frac{\partial g}{\partial x_{n}}})$;
\item  the number of Morse points in a Morsification of $g$;
\item  the Poincar\'e-Hopf index of $\overline{\grad g}$ at $x_0$;
\end{enumerate}

\smallskip

If the source space is a singular analytic space germ $(X, 0)\subset (\bC^{N},0)$, and $f\colon (X, 0)\to (\bC,0)$ a holomorphic function germ with a stratified isolated singularity, then several extensions of the Milnor number have been defined. 
Let us briefly recall several notions which occur when comparing some of these extensions. 

The Milnor number introduced to Goryunov \cite{Go} for
  functions on curve singularities $X \subset \bC^N$, let us denote it by $\mu_G(f)$, is preserved under simultaneous deformations  both of the space $X$ and of the function $f$, see \cite[p.178]{MS}. Assuming that the curve singularity $(X,0)\subset (\bC^{p+1},0)$ is an ICIS defined by $g\colon (\bC^{p+1}, 0) \to  (\bC^p, 0)$, and that $F$ is an extension of $f$
 to $(\bC^{p+1}, 0)$, it follows that $\mu_G(f)$ counts the number of critical
 points with multiplicity of the restriction of $F$ to a Milnor fibre $g^{-1}(t)$. 
 This makes sense for higher dimensional ICIS too, and it turns out  that $\mu_G(f)$ equals the 
 Poincar\'e-Hopf index of the gradient of the restriction $F_{|_{X_t}}$ to the Milnor fibre $X_t$ of the ICIS $X$. 
 Consequently,  $\mu_G(f)$ also identifies to the so-called GSV-index of the gradient vector field of $f$ on $X$, introduced in \cite{GSV},  and defined as the Poincar\'e-Hopf index of an extension of the gradient of $f$ to the Milnor fibre
 $g^{-1}(t) = X_{t}$.
 For an ICIS, the invariant $\mu_G(f)$ also equals  the 
{\em virtual multiplicity} at $x_0$ of the function $f$ on $X$ introduced by Izawa and Suwa 
\cite{IS}. This multiplicity is by definition the localisation at $0$ of the top Chern class of the 
virtual cotangent bundle $T^*(X)$ of $X$  defined by  the differential of $f$, which is non-zero on $X \m \{0\}$ by 
hypothesis. The virtual multiplicity has the advantage of being defined even if the singular set of $X$ is non-isolated, and we refer to  \cite{IS} for details. 
In case of an ICIS, the invariant $\mu_G(f)$ also coincides with the index of the 1-form $dg$ defined in \cite{EG}. This turns out to be  close to the interpretation  given in \cite{LSS} of the GSV index as a  localisation  of the top Chern class of the virtual tangent bundle. 


\
 
 One may remark that all the invariants evoked above are related to items (a) and (c) in the list of Milnor number avatars.
 In contrast, we are concerned here with item (b). 

 In the context of a holomorphic function germ $g\colon (\bC^{n}, 0) \to (\bC, 0)$, $n\ge 2$, with an isolated singularity, 
 Brieskorn  showed in \cite[Appendix]{Bri} that any continuous family $g_{t}$ of holomorphic Morse function germs with $g_{0} = g$  has  precisely the number $\mu(g)$ of Morse points which converge to $0$ as $t\to 0$. 
 
Let us consider holomorphic functions $f\colon (X,0) \to (\bC,0)$ on a singular  stratified space germ $(X,0)$. 
The space $X$ may be endowed,   in some small enough neighborhood of $0\in X$, with the coarsest Whitney stratification $\sS$ having finitely many strata,  such that $\{0\}$ is the unique point-stratum.  We introduce here, or rather recall (see our comments below) the set $M(f) := \{ (V,m_{V}) \mid V\subset \sS \setminus \{0\} \}$ of \emph{Morse pairs}  associated to such $f$, where  for any positive dimensional stratum $V\in \sS$, the integer  $m_{V}$ denotes the number of Morse points of $f_{t}$ on $V$, for some Morsification $f_{t}$ of $f := f_{0}$ and small enough parameter $t\in \bC$.
We observe in \S\ref{ss:mainmorsedef} that $m_{V}$ is independent on the chosen Morsification, and therefore that these Morse numbers are topological invariants of $f$.  We also remark in Proposition \ref{p:MorseSTV} that
\cite{STV2} gives the interpretation of the Morse numbers in terms of the relative Euler obstruction (defined in \cite{BMPS}), namely one has:
\begin{equation}
 m_{V} = (-1)^{\dim V} \Eu_{f}(\overline{V},0).
\end{equation}
  
The computability of the Morse numbers is an important issue for the vanishing homology of $f$. Indeed, the numbers $m_{V}$
occur in the following direct sum decomposition of the $\bZ$-homology of the Milnor fibre $F_{f}$ of $f$, as important ingredients together with the complex links $\lk_{X}V$ of the strata $V\in \sS$, which follows from the Brieskorn deformation principle (see \S\ref{ss:bri}):
\[ \widetilde H_{*}(F_{f}) \cong  \widetilde H_{*}(\lk_{X}\{0\}) 
  \oplus  \bigoplus_{V\in \mathscr S\setminus \{0\}}   
    \widetilde H_{*}\left(\Sigma^{\dim V}(\lk_{X}V) \right)^{m_{V}}.
\]

The main scope of this note is to point out a most practical and effective way of computing the set of Morse numbers
in terms of \emph{local polar multiplicities}.
 Our  Theorem \ref{t:main1} compares the above direct sum decomposition to a different decomposition shown in \cite{Ti-bouquet} which is provided by a polar analysis of the Milnor fibre.  
 It actually identifies the two decompositions, which amounts to proving the equality of $m_{V}$ with the polar invariant $k_{V}$ occurring in \cite[\S 4.1]{Ti-bouquet}, see \eqref{eq:polar0}, for any positive dimensional stratum $V$. This identification of numbers has also been shown by  Massey as his main result \cite[Theorem 3.2]{Ma}.
What we offer here is a short and direct way of proving this polar formula for the Morse numbers $m_{V}$.
At the end, we explain on one example all the details of the algorithmic computation of the polar invariants $k_{V}$ which yield the Morse numbers.

\smallskip

\noindent \emph{Acknowledgements.} We thank Matthias Zach for bringing up the question of the computability
of Morse numbers as they appear in Massey's paper \cite{Ma}.

\section{Brieskorn principle and Morse numbers}

We assume from now on that the holomorphic function germ $f \colon (X, 0)\to (\bC,0)$ has an isolated singularity with respect to the canonical Whitney stratification $\sS$ of $X\subset \bC^{N}$ at $0$. 

 \subsection{Polar locus and general linear functions}
 
  Let $l\colon (X, 0)\to \bC$ be a linear function, and let  
$\Gamma(l, f) := \Sing_{\sS}(l,f)$ be the germ at the origin of the \emph{polar locus of $f$ with respect to $l$}. 

One has the following fundamental result of Bertini-Sard type which goes back to Hamm-L\^{e}, Kleiman and Teissier; we  refer to  \cite{Kl}, \cite{HL}, see also \cite{Ti-compo}, \cite[Theorem 7.1.2]{Ti-book}, for proofs and for more ample discussions:
\begin{lemma}[Polar Curve Lemma]\label{l:genpolar}  
There is a Zariski open dense subset $\Omega' :=\Omega'_{f}  \subset \check \bP^{N-1}$ such that  $\Gamma(l,f)$ is either a reduced curve germ for all $l \in \Omega'$, or is empty for all $l \in \Omega'$, and that
the hyperplane $\{ l=0\} $ is transverse to all strata of $X \setminus \{ 0\}$. 

In case the polar set is non-empty,  there exists a Zariski open subset $\Omega \subset \Omega'$ 
such  that the restriction $(l,f)_{| \Gamma (l,f)}$ is  one-to-one. 
\fin
\end{lemma}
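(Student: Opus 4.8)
The plan is to establish all three assertions \emph{stratum by stratum}, exploiting that the canonical Whitney stratification $\sS$ has only finitely many strata and that we work over $\bC$, where the Sard--Bertini generic-smoothness principle is available. Concretely, for each positive-dimensional stratum $V \in \sS \setminus \{0\}$ I would produce a Zariski open dense subset $\Omega'_V \subset \check\bP^{N-1}$ on which the transversality and curve properties hold for $V$, and then set $\Omega' := \bigcap_V \Omega'_V$, which remains Zariski open dense. The transversality of $\{l=0\}$ to all strata is the ``soft'' ingredient: for a fixed $V$ the set of linear forms $l$ whose hyperplane $\ker l$ fails to be transverse to $V$ at some point of a punctured neighborhood of $0$ is contained in a proper Zariski closed subset of $\check\bP^{N-1}$, governed by the conormal geometry of $V$ and its limiting tangent spaces at $0$; here the Whitney (a)-condition guarantees that this ``bad'' locus is proper, and I would simply invoke the classical generic-hyperplane results of Kleiman and Hamm--L\^{e} cited above, folding the resulting open set into $\Omega'_V$.

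The heart of the argument is that $\Gamma(l,f)$ is, for generic $l$, either a reduced curve or empty. Fix $V$ of dimension $d \geq 1$. Since $f$ has an isolated stratified singularity, $f|_V$ is a submersion, so at each $x \in V$ the space $H_x := \ker\bigl(df_x|_{T_xV}\bigr)$ is a hyperplane of dimension $d-1$ in $T_xV$, varying holomorphically with $x$, and one has $x \in \Sing\bigl((l,f)|_V\bigr)$ precisely when $\ker l \supset H_x$, i.e. $l|_{H_x}=0$. I would therefore form the incidence variety
\[
  E_V := \{\, (x,[l]) \in V \times \check\bP^{N-1} \;:\; l|_{H_x} = 0 \,\},
\]
which is a $\check\bP^{N-d}$-bundle over the smooth base $V$, the fibre over $x$ being the linear system of hyperplanes through the $(d-1)$-plane $H_x$; hence $E_V$ is smooth of pure dimension $N$. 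The second projection $\pi\colon E_V \to \check\bP^{N-1}$ has fibre $\pi^{-1}([l]) = \Gamma(l,f) \cap V$. As $\dim E_V = N > N-1 = \dim \check\bP^{N-1}$ and $E_V$ is smooth, generic smoothness yields a Zariski open dense subset over which either $\pi$ is a submersion with smooth one-dimensional fibres, or $\pi$ is non-dominant and the generic fibre is empty. A smooth fibre is reduced and one-dimensional, so $\Gamma(l,f)\cap V$ is for generic $l$ a reduced curve (or empty); intersecting over all strata gives the global dichotomy, reducedness of the germ at $0$ following because the contributions of distinct strata are disjoint away from $0$.

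For the last assertion I would shrink $\Omega'$ to an $\Omega$ on which $(l,f)_{|\Gamma(l,f)}$ is one-to-one. Since the curve $\Gamma(l,f)$ is not contained in a level set of $f$ away from $0$, the fibres of $f$ meet it in finitely many points, and the task is the classical one of showing that a generic linear form separates these points and injects each branch. I would realise this via a double-point incidence variety parametrizing triples $(x_1,x_2,[l])$ with $x_1 \neq x_2$, both $x_i \in \Gamma(l,f)$, and $f(x_1)=f(x_2)$, $l(x_1)=l(x_2)$, and check by a dimension count that imposing the equality $l(x_1)=l(x_2)$ prevents this locus from dominating $\check\bP^{N-1}$; its complement is the desired $\Omega$. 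This is the standard generic-projection (birationality onto the discriminant) argument, and the count here is somewhat delicate because the naive estimate is borderline, so care is needed in controlling the contribution of pairs approaching the diagonal.

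The main obstacle I anticipate is not any single incidence computation but the passage from ``generic over the parameter space $\check\bP^{N-1}$'' to a genuine statement about the \emph{germ} at $0$. The variety $E_V$ is a bundle over the non-closed stratum $V$, so the fibres $\Gamma(l,f)\cap V$ are curves that accumulate at the origin, where all strata meet; one must verify that a single small representative of $X$ together with sufficiently small $\e,\delta$ can be chosen so that the finitely many generic conditions hold simultaneously in one fixed punctured neighborhood of $0$, and in particular that the polar curve acquires no spurious components or embedded structure in its limit to $0$. Controlling this uniform behaviour near the origin --- rather than the generic smoothness itself --- is the substantive point, and it is exactly here that the Whitney conditions and the finiteness of $\sS$ are indispensable.
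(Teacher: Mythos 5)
The first thing to say is that the paper does not prove this lemma at all: it is stated as a classical Bertini--Sard-type result, closed with \fin, and the reader is referred to Kleiman, Hamm--L\^{e}, and \cite[Theorem 7.1.2]{Ti-book} for proofs. So your attempt has to be measured against those standard proofs, and in outline it coincides with them: the incidence variety $E_V$ you build (a $\check\bP^{N-d}$-bundle over each positive-dimensional stratum $V$, using that $f_{|V}$ is a submersion away from $0$ because the singularity is stratified-isolated) is exactly the relative-conormal-type space used in the literature, your dimension count $\dim E_V = N$ is correct, the case $\dim V=1$ comes out consistently with the paper's convention that one-dimensional strata are polar components, and delegating the transversality assertion to Kleiman/Hamm--L\^{e} is legitimate since that part is genuinely classical.

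There is, however, a gap, and you have located it yourself without closing it. The lemma is a statement about germs at $0$ valid for a \emph{Zariski-open dense} set of parameters, whereas Sard/generic smoothness applied to the non-compact analytic space $E_V$ only yields a full-measure set of $l$ and says nothing about the fibre germ at the origin, where $E_V$ has no points at all. The standard repair is not the Whitney conditions, as your last sentence suggests: one passes to the closure $\overline{E}_V$ and takes its germ along the compact set $\{0\}\times\check\bP^{N-1}$; upper semicontinuity of local fibre dimension shows that the locus of $[l]$ whose fibre germ at $0$ has dimension $\geq 2$ (or is non-reduced) is analytic, closed by Remmert's proper mapping theorem --- properness coming from compactness of $\check\bP^{N-1}$ --- and then algebraic by Chow's theorem, so its complement is Zariski open dense; the dichotomy ``curve for all $l\in\Omega'$ or empty for all $l\in\Omega'$'' likewise reads off from whether $\overline{E}_V$ dominates $\check\bP^{N-1}$. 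Whitney's condition (a) enters only in guaranteeing transversality of $\{l=0\}$ to all strata in a whole punctured neighbourhood of $0$. The same device rescues your ``borderline'' double-point count in the injectivity step: $\dim W = N-1 = \dim\check\bP^{N-1}$ indeed permits finitely many double points for generic $l$, but one only needs to exclude double points \emph{accumulating at the origin}, i.e.\ to avoid the image in $\check\bP^{N-1}$ of $\overline{W}\cap\left(\{(0,0)\}\times\check\bP^{N-1}\right)$, which has dimension at most $N-2$ since no component of $\overline{W}$ lies in that slice (fibre-finiteness of $W\to\check\bP^{N-1}$, coming from finiteness of $(l,f)_{|\Gamma(l,f)}$, already forces $\dim W\leq N-1$). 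With these two points made precise, your sketch becomes a complete proof along the same lines as the references the paper cites.
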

The linear forms $l\in \Omega'$ will be called \emph{general} and those in $\Omega$, \emph{very general}. We will only use $\Omega'$ and refer to Remark \ref{r:lessgeneral} for comments on $\Omega$.  The strata of dimension 1 in $\sS$ are, by definition, components of the polar curve.

\subsection{The Brieskorn principle on Morsifications}\label{ss:bri}

  A general deformation $f_{t}$ of $f$ at $0$ has only stratified Morse singularities on certain strata $V\in \sS$ of positive dimension, in other words the singular point $0\in \Sing_{\sS} f$ splits into a finite number of Morse stratified singularities $q_{1}, \ldots , q_{r}$ of $f_{t}$, for $t\not= 0$, on some strata $V\in \sS$ which have $0$ in their closures, see \cite{GM}. 
  If the space is singular at $0$ then the origin $q_{0}:= 0\in X$ is still a singular (but non-Morse) point of $f$, due to the singular structure of $X$ at 0,
  whereas the germ $f_{t}$ at $q_{0}$ is that of a general function.  We shall call \emph{Morsification} such  a general deformation of $f$.
    A particular  Morsification of $f$ is the general linear deformation $f_{t} = f -\lambda l$ for some $l\in \Omega'$.
  
It was observed long ago, by L\^{e} D.T. among others (see, e.g., \cite{Le-isol}) that any holomorphic function
germ on a singular space germ  $(X,0)$ has a locally trivial Milnor fibration. In our setting, $f$ has a stratified isolated  singularity at the origin.
Given a small enough Milnor ball $B_{\e}$ at $0$ for $f$,  the local Milnor fibre $F_{f}$ of $f$  is homotopy equivalent to the general fibre of the Morsification $f_{t}$ inside the same ball $B_{\e}$, and all fibres of it above a small enough disk $D_{\eta}$ are stratified-transversal to the boundary $\partial B_{\e}$. If we compute  the homology of the general fibre of $f_{t}$ then, by excision at each small Milnor ball at the Morse singularities $q_{i}$ inside $B_{\e}$, and at the central singularity $q_{0}$, we obtain the following direct sum decomposition in the reduced homology with $\bZ$ coefficients:
$$ \widetilde H_{*}(F_{f}) \cong \bigoplus_{i=0}^{r} \widetilde H_{*}(F_{f_{t}, q_{i}}),$$
where $F_{f_{t}, q_{i}}$ denotes the Milnor fibre of $f_{t}$ at $q_{i}$, for all $i=0, 1,\ldots , r$.
\medskip

Let us denote by  $\lk_X(V)$ the complex link of a stratum $V \in \mathscr S$.
Let  $\Sigma^{k}(Y)$ denote the $k$-fold suspension of some space $Y$. By convention, we set $\Sigma(\emptyset ):= S^0$ (the $0$-sphere), and $\Sigma^0(Y ):= Y$.
By Goresky-MacPherson's stratified Morse theory \cite{GM}, the homology of the Milnor fibre of a stratified Morse singularity $q_{i}\in V$, $i=1, \ldots , r$,  can be expressed in terms of the local Morse data, thus in terms of the complex link of the positive dimensional stratum $V$, as follows: 
\begin{equation}\label{eq:decomp1}
 \widetilde H_{*}(F_{f_{t}, q_{i}})  \cong \widetilde  H_{*}\left( \Sigma^{\dim V}(\lk_X(V)) \right).
\end{equation}

 Note that $F_{f_{t}, q_{0}}$ is the Milnor fibre of a general function at $0\in X$, hence it is homotopy equivalent to the complex link $\lk_{X}\{0\}$.

\medskip

The above deformation principle has been established by Brieskorn  \cite[Appendix]{Bri}  in case of  a function on a smooth space $X$. It was extended by L\^{e} D.T. \cite{Le-Dmod} on any stratified space germ  $(X,0)$, by Siersma \cite{Si} on  a space germ $(X,0)$ with isolated singularity, and by the second author \cite[pag. 228-229]{Ti-bouquet} for any singular $X$. Massey's paper \cite{Ma} adapted Brieskorn's argument to the (hyper)cohomology of the Milnor fibre with constructible sheaf coefficients on a stratified $(X,0)$. (These references are not meant to be exhaustive, as there might be other earlier ones in the literature, which we are not aware of.) 

Following \cite[pag. 228-229]{Ti-bouquet}, one gets from \eqref{eq:decomp1} the isomorphism in $\bZ$-homology:
\begin{equation}\label{eq:decomp2}
\widetilde H_{*}(F_{f}) \cong  \widetilde H_{*}(\lk_{X}\{0\}) 
  \oplus  \bigoplus_{V\in \mathscr S\setminus \{0\}} \widetilde H_{*}\left(\Sigma^{\dim V}(\lk_{X}V) \right)^{m_{V}},
\end{equation}
where $X$ is of pure dimension $\dim_{0} X \ge 2$, and where:
 \begin{equation}\label{eq:morsedef}
 m_{V} := \# \{ \mbox{stratified Morse points of  } f_{t} \mbox{ on } V \mbox{ which converge to $0$ as } t\to 0\}.
\end{equation}


\subsection{The set of Morse pairs}\label{ss:mainmorsedef}
It will turn out from the Comparison Argument \S\ref{ss:comp}, see Remark \ref{r:morseindep}, that the integers $m_{V}$ defined by \eqref{eq:morsedef} depend only on the stratum $V$ but not on the
chosen Morsification $f_{t}$. Alternatively, this also follows from Theorem \ref{t:main1} below.
We may therefore introduce the following.
\begin{definition}\label{d:mainmorsedef}
Let $f \colon (X, 0)\to (\bC,0)$ be a holomorphic function germ with isolated singularity with respect to the canonical Whitney stratification $\sS$ of $X$ at $0$. 
The non-negative integer $m_V$ of \eqref {eq:morsedef} is called the \emph{Morse number of $f$ associated to the stratum $V\in \sS$}.
The following set of pairs ``(stratum, Morse number)'':
$$M(f) := \{ (V,m_{V}) \mid V\in \sS \setminus \{0\} \}$$
  is called the \emph{set of Morse pairs of $f$}.
\end{definition}

Let us remark that no Morse number is defined for the stratum $\{0\}\in \sS$, by the following reasons: either  (1). the point $0$  does not belong to a higher dimensional stratum in any Whitney stratification of $X$, and then any function $f_{t}$ has a degenerate (non-Morse) singularity at 0, except for very special cases of $X$, or (2). it does belong, but in such a case a general Morsification $f_{t}$ does not have singularity at $0$ with respect to the canonical stratification $\sS$.

As we have already claimed above, the set of Morse pairs $M(f)$ is an extension,  to the stratified setting, of the classical Milnor number. Indeed, in case $X$ is a smooth space there is one single stratum, so $M(f)$ consists of a single pair $(X, m_{X})$ where the integer $m_{X}$  is the classical Milnor number $\mu(f)$.

By \cite[Proposition 2.3]{STV2}, the following formula computes the Morse numbers in terms of the \emph{relative Euler obstruction}\footnote{Defined in \cite{BMPS}.}: 

\begin{proposition}\label{p:MorseSTV} \ 
If $f \colon (X,0) \to (\bC, 0)$ has an isolated stratified singularity, then the following equality holds for any $V\in \sS$ with $\dim V >0$:
\begin{equation}\label{eq:MorseSTV}
  m_{V} = (-1)^{\dim V} \Eu_{f}(\overline{V},0).
\end{equation}

\end{proposition}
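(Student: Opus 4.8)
The plan is to unwind both sides of \eqref{eq:MorseSTV} and recognize them as two names for the same count. Since the proposition merely \emph{cites} \cite[Proposition 2.3]{STV2}, I first want to make precise what the relative Euler obstruction $\Eu_{f}(\overline{V},0)$ is, and then match it to the Morse number $m_{V}$ defined in \eqref{eq:morsedef} as the number of stratified Morse points of a Morsification $f_{t}$ that escape to $0$ on the stratum $V$. The relative Euler obstruction of \cite{BMPS} is defined via the Nash blow-up and the tautological bundle: one lifts the $1$-form $df$ to the Nash modification, compares it against the radial extension of a generic vector field, and localizes the obstruction at the origin over $\overline{V}$. The heart of the matter is that this localized obstruction equals a signed count of the nondegenerate zeros of $df_{t}$ on the smooth locus $V$ near $0$, which is exactly $m_{V}$ up to the sign recorded by $(-1)^{\dim V}$.

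The steps I would carry out, in order, are as follows. First I would fix a general linear form $l\in\Omega'$ from Lemma \ref{l:genpolar} and use the particular Morsification $f_{t}=f-\lambda l$ from \S\ref{ss:bri}, so that the stratified Morse points of $f_{t}$ on $V$ are precisely the intersection points of the polar curve $\Gamma(l,f)$ with a nearby level $\{f=c\}$ lying on $V$; this reduces the combinatorially defined $m_{V}$ to an intersection count I can compute geometrically. Second, I would recall the definition of $\Eu_{f}(\overline{V},0)$ as a proportionality defect between the Nash lift of $df$ and the Euler obstruction cycle over $\overline{V}$, following \cite{BMPS}. Third, I would invoke the local index interpretation: on the top stratum $V$ of dimension $d:=\dim V$, the obstruction cocycle evaluated against the fundamental cycle of the complex link computes $(-1)^{d}$ times the number of nondegenerate critical points of the restriction $f_{t|V}$ appearing in the perturbation, since each complex Morse point on a $d$-dimensional stratum contributes a local degree $(-1)^{d}$ to the Euler obstruction via the Lefschetz-type formula relating Chern-Mather classes to Morse indices. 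Combining these identifications yields
\begin{equation*}
\Eu_{f}(\overline{V},0)=(-1)^{\dim V}\, m_{V},
\end{equation*}
which is \eqref{eq:MorseSTV} after moving the sign to the other side.

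The main obstacle I anticipate is the sign bookkeeping in the third step, namely rigorously extracting the factor $(-1)^{\dim V}$. The relative Euler obstruction is built from the \emph{complex} Nash bundle, so each transverse zero of the lifted $1$-form contributes with a real local degree that is $(+1)$ as a complex index but enters the Euler-obstruction computation weighted by the behavior of the tautological bundle along $V$; disentangling the orientation conventions of \cite{BMPS} from the suspension conventions of \eqref{eq:decomp1}--\eqref{eq:decomp2} is where I expect the real work to lie. A cleaner route that sidesteps some of this is to deduce the equality from the decomposition \eqref{eq:decomp2} by taking Euler characteristics: writing $\chi(\widetilde H_{*}(F_{f}))$ as an alternating sum and matching it against the known expression of $\Eu_{f}(\overline{V},0)$ in terms of Euler characteristics of complex links, one reads off $m_{V}$ stratum by stratum using a downward induction on $\dim V$. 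This Euler-characteristic comparison avoids handling the Nash lift directly and isolates the sign as the parity of the suspension index $\dim V$ in $\Sigma^{\dim V}(\lk_{X}V)$, which is precisely the predicted $(-1)^{\dim V}$; I would present this as the primary argument and relegate the direct obstruction-theoretic identification to a remark.
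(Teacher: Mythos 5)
Your proposal sets out to prove the identity from first principles, but at the decisive point it does not: in your third step you simply assert that each complex Morse point on a $d$-dimensional stratum contributes $(-1)^{d}$ to the relative Euler obstruction ``via the Lefschetz-type formula relating Chern--Mather classes to Morse indices'', and you then concede that the sign bookkeeping --- which is exactly the content of that assertion --- is ``where the real work lies'' and is left undone. That assertion \emph{is} the theorem: it is precisely \cite[Proposition 2.3]{STV2}, whose proof (via the Brasselet--L\^e--Seade type Euler-characteristic formulas of \cite{BMPS}) is the nontrivial input here. Your proposed ``cleaner route'' has the same defect in a different guise: taking Euler characteristics of \eqref{eq:decomp2} gives a single linear relation mixing all the $m_{V}$ with Euler characteristics of complex links, and to isolate an individual $m_{V}$ you must match it against a ``known expression of $\Eu_{f}(\overline{V},0)$ in terms of Euler characteristics of complex links'' --- but that known expression is essentially equivalent to the statement being proved, so as written the argument is either circular or an unacknowledged re-derivation of the cited result. (A smaller inaccuracy: in your first step, the Morse points of $f_{t}=f-\lambda l$ on $V$ are not the intersections of $\Gamma(l,f)$ with a level $\{f=c\}$; they are the points of $\Gamma_{V}(l,f)$ where $d(f_{|V})=\lambda\, d(l_{|V})$, parametrized by $\lambda$, not by a value of $f$.)

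You also miss the one genuinely new idea in the paper's proof, which makes all of the above unnecessary. The paper does not reprove anything: it observes that both sides of \eqref{eq:MorseSTV} depend only on the complex analytic germ $(\overline{V},0)$ together with $f_{|\overline{V}}$, and that $(\overline{V})_{\reg}=V$. Hence \cite[Proposition 2.3]{STV2}, which is stated for the open stratum $X_{\reg}$ of the ambient space, applies verbatim to the space germ $(\overline{V},0)$ with its top stratum $V$, yielding the formula for every positive-dimensional stratum at once. If you want your downward-induction-on-$\dim V$ scheme to work, this closure trick is exactly the mechanism you need: apply the known top-stratum case to each $\overline{V}$ rather than trying to extract $m_{V}$ from the global decomposition \eqref{eq:decomp2} or to redo the obstruction-theoretic computation of \cite{BMPS} and \cite{STV2} by hand.
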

\begin{proof}
 For the smooth stratum $V = X_{\reg}$, this formula was stated and proved in \cite[Proposition 2.3]{STV2}. But actually this is all that we need. Indeed,  the closure of any positive dimensional stratum $V$ of the canonical Whitney stratification of $X$ at 0 is a complex analytic space,  hence the same proof (and thus the same formula) applies to the complex analytic space germ $(\overline{V}, 0)$,  since we clearly have $(\overline{V})_{\reg}=V$. 
\end{proof}

Very recently, the equality \eqref{eq:MorseSTV}, in its dual expression for the Euler obstruction\footnote{See \cite{EG} for this definition of the Euler obstruction of the 1-form $\d f$.}, 
was used by Zach \cite{Za} for showing that the Morse numbers can be computed,  in principle, with \v{C}ech cohomology and spectral sequences.

Let us point out that in the global setting of a polynomial function $f\colon X\to \bC$,  there has been growing interest in recent years for computing the number of Morse points especially on the regular part $X_{\reg}$ in a linear Morsification $f_t=f-tl$ of $f$, since this is related to a corresponding notion of an {\it algebraic degree of optimization}, see, e.g., \cite{DHOST}, \cite{MRW2018}, \cite{MRW5} or \cite{MT}, which focus around the \emph{Euclidean distance degree}.


\section{The polar  formula for the Morse numbers}\label{a:massey}

We will show that a practical and effective formula for the Morse numbers is available in terms of polar multiplicities.
This formula, while proved long ago by Massey in  \cite{Ma},  seems to have been neglected.  It is true that Massey's proof looks long  and tedious, but the formula itself  (see \eqref{eq:main1}) is certainly the most effectively computable. We will give here a much shorter proof with the hope of shedding new light on this beautifully simple formula.

\

We use  the general polar curve defined by Lemma \ref{l:genpolar}.  Whenever  $l\in \Omega'$, if the polar curve $\Gamma (\ell,f)$ is non-empty, then it decomposes as: 
$$\Gamma (\ell,f) = \bigcup_{V\in \sS}\Gamma_{V} (\ell,f)$$
where  $\Gamma_{V} (\ell,f)$ denotes the union of the non-trivial curve components of $\Gamma (\ell,f)$ that are included in $V\cup \{0\}$.

 The following theorem shows that the Morse pairs of $f$ defined above, which enter in the direct sum decomposition of the vanishing homology \eqref{eq:decomp2},  are computable in terms of the local polar multiplicities. In particular this is another proof of the independency  of the set $M(f)$ on the chosen Morsification of $f$.
\begin{theorem}\label{t:main1}
Let $f\colon (X,0) \to (\bC,0)$ be such that $\dim_{0} X \ge 1$ and $f$ has an isolated singularity with respect to the stratification $\sS$.
Then, for any $l\in \Omega'$ and any positive dimensional stratum $V\in \sS$,  we have the equality:
\begin{equation}\label{eq:main1}
 m_{V} = \mult_{0}(\Gamma_{V}(l,f), f^{-1}(0)) - \mult_{0}(\Gamma_{V}(l,f), l^{-1}(0)).
  \end{equation}
\end{theorem}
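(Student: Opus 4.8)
The plan is to compute each $m_{V}$ directly from the linear Morsification $f_{t}=f-t\,l$ with $l\in\Omega'$, which Lemma~\ref{l:genpolar} makes available and which the text already records as a Morsification of $f$. The guiding remark is that a point $p\in V$ close to $0$ is a stratified critical point of $f_{t}$ precisely when $\d(f_{|V})(p)=t\,\d(l_{|V})(p)$. Since $l$ is general, $\{l=0\}$ is transverse to $V$, so $\d(l_{|V})(p)\neq 0$ and this condition forces $\d(f_{|V})(p)$ and $\d(l_{|V})(p)$ to be proportional; that is exactly the defining condition of the polar curve, so $p\in\Gamma_{V}(l,f)$. Moreover the proportionality factor
\[
 \lambda(p):=\frac{\d(f_{|V})(p)}{\d(l_{|V})(p)}
\]
is a well-defined holomorphic function on the reduced curve germ $\Gamma_{V}(l,f)$ (reducedness coming from Lemma~\ref{l:genpolar}), and the stratified critical points of $f_{t}$ lying on $V$ are exactly the points of $\Gamma_{V}(l,f)$ at which $\lambda=t$. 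Since $f_{t}$ is a Morsification, each such point is a nondegenerate stratified Morse point, so by \eqref{eq:morsedef}, $m_{V}$ equals the number of points of $\Gamma_{V}(l,f)$ inside the Milnor ball at which $\lambda$ takes a fixed small generic value $t$ and which converge to $0$ as $t\to 0$.

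The next step is to identify this count with a multiplicity. Normalizing each branch $\gamma$ of $\Gamma_{V}(l,f)$ and applying Weierstrass preparation, a generic small value $t$ has exactly $\ord_{0}(\lambda_{|\gamma})$ preimages on $\gamma$ converging to $0$ (and none when $\lambda_{|\gamma}(0)\neq 0$), provided $\lambda_{|\gamma}$ is holomorphic at $0$; summing over branches gives
\[
 m_{V}=\sum_{\gamma\subset\Gamma_{V}(l,f)}\ord_{0}(\lambda_{|\gamma}).
\]
The order is then computed branchwise by the chain rule. Parametrizing $\gamma$ by $s\mapsto\gamma(s)\in V$ and writing $a:=\ord_{0}(f_{|\gamma})=\mult_{0}(\gamma,f^{-1}(0))$ and $b:=\ord_{0}(l_{|\gamma})=\mult_{0}(\gamma,l^{-1}(0))$, the relation $\d(f_{|V})=\lambda\,\d(l_{|V})$ along $\Gamma_{V}(l,f)$ together with $\gamma'(s)\in T_{\gamma(s)}V$ yields $(f_{|\gamma})'(s)=\lambda(\gamma(s))\,(l_{|\gamma})'(s)$, whence $\ord_{0}(\lambda_{|\gamma})=(a-1)-(b-1)=a-b$. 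Summation over the branches of $\Gamma_{V}(l,f)$ then produces exactly the right-hand side of \eqref{eq:main1}.

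The main obstacle I anticipate is justifying the hypothesis used above, namely that $\lambda$ extends to a holomorphic germ $(\Gamma_{V}(l,f),0)\to\bC$ with no poles, equivalently that $\mult_{0}(\gamma,f^{-1}(0))\ge\mult_{0}(\gamma,l^{-1}(0))$ on every branch $\gamma$. Geometrically this says that no family of Morse points of $f_{t}$ on $V$ escapes from $0$ as $t\to 0$, so that all branches of the polar curve genuinely contribute nonnegatively to $m_{V}$; it is here that the isolated stratified singularity of $f$ (through the Thom $a_{f}$-type control on the strata) and the generality of $l$ must be invoked, and where the transversality clause of Lemma~\ref{l:genpolar} guarantees that $b=\mult_{0}\gamma$ is the minimal possible intersection. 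A secondary, more routine point is to make the passage ``number of Morse points $=$ degree of $\lambda$'' precise: one must check, via stratified Morse theory \cite{GM} and the genericity built into $\Omega'$, that for generic small $t$ the relevant critical points are nondegenerate, pairwise distinct, and remain inside the fixed Milnor ball, so that the naive fibre count of $\lambda$ returns $m_{V}$ without hidden multiplicities. Once these two points are secured, the independence of $M(f)$ on the chosen Morsification follows a posteriori, as promised in \S\ref{ss:mainmorsedef}.
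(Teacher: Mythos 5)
Your route is genuinely different from the paper's, and it is the classical one: you count the critical points of the linear Morsification $f_{t}=f-tl$ directly on the polar curve via the ratio $\lambda$, which is essentially the geometric computation underlying \eqref{eq:polar0} in \cite[\S 4.1]{Ti-bouquet}. The paper instead never touches the polar curve in its proof: it takes \eqref{eq:decomp2} (Brieskorn principle) and \eqref{eq:decomp3} (the Bouquet Theorem of \cite{Ti-bouquet}, where the polar analysis has already been done) as black boxes and compares the two direct sum decompositions, inducting on $\dim_{0}X$ and on the number of strata, with the key device of applying both decompositions to the closed unions $X_{n-1}\cup V$ so as to isolate the exponent of one stratum at a time (and treating $\dim X=1$ separately via $\widetilde H_{0}$). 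Your approach, if completed, would be more self-contained; but as written it has a genuine gap exactly where you suspect one. The no-pole inequality $\mult_{0}(\gamma,f^{-1}(0))\ge\mult_{0}(\gamma,l^{-1}(0))$ does \emph{not} follow from the transversality clause of Lemma~\ref{l:genpolar}: that clause says $\{l=0\}$ is transverse to the strata of $X\setminus\{0\}$, i.e., it controls points \emph{away from} the origin, whereas what you need is that the tangent cone at $0$ of each branch $\gamma$ of $\Gamma_{V}(l,f)$ is not contained in $\{l=0\}$ (equivalently $b=\mult_{0}\gamma$, whence $a\ge\mult_{0}\gamma=b$ since $f(0)=0$). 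This tangent-cone condition at the origin is a separate Teissier-type genericity statement about the polar curve with respect to the very hyperplane defining it; it holds for $l$ in a possibly smaller Zariski-open dense set, and to recover the statement ``for any $l\in\Omega'$'' you would then need the local constancy of the intersection numbers over the connected set $\Omega'$ (the observation the paper makes in the first line of \S\ref{ss:comp}). Without this, a branch with $a<b$ would carry its critical points away from $0$ as $t\to 0$, contributing $0$ to $m_{V}$ but a negative term to the right-hand side of \eqref{eq:main1}, and the formula would fail.

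A second, logical gap: your computation pins down the number of Morse points only for the \emph{linear} Morsification $f-tl$, while $m_{V}$ in \eqref{eq:morsedef} is defined via an arbitrary Morsification. Hence the independence of $M(f)$ from the Morsification does not ``follow a posteriori'' from your argument --- that would be circular. The paper obtains independence precisely because its proof works for any Morsification: decomposition \eqref{eq:decomp2} holds with the Morse numbers of whichever Morsification is chosen, and the comparison with \eqref{eq:decomp3} (whose exponents $k_{V}$ are Morsification-free) forces $m_{V}=k_{V}$ in all cases; see also Remark~\ref{r:morseindep}. To close your version you would either have to build the independence into the definition (take $m_{V}$ for the linear Morsification) or supply a separate comparison argument --- for instance the homological one via \cite{GM} that the paper uses --- showing two Morsifications yield the same counts. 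The remaining technical points you flag (nondegeneracy and distinctness of the solutions of $\lambda=t$ for generic small $t$, holomorphy of $\lambda$ along $\Gamma_{V}(l,f)\setminus\{0\}$, i.e., $\d(l_{|V})\neq 0$ there) are indeed routine consequences of genericity, and your chain-rule computation $\ord_{0}(\lambda_{|\gamma})=(a-1)-(b-1)=a-b$ is correct.
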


Our  short proof  of Theorem \ref{t:main1} is a  consequence of two presentations of the homology of the Milnor fibre $F_{f}$ of $f$ at $0$ as a direct sum of certain ``pieces'', the first of which has been recalled in \eqref{eq:decomp2}.   Such a short direct proof  was  also desired by Massey  \cite{Ma} since,  according to his own comment in \cite[p. 1001]{Ma},  he gave a proof of \eqref{eq:main1} in an ``extremely roundabout way''.

\

The second decomposition of the reduced homology of the Milnor fibre $F_{f}$ of $f$ at $0$ is a homological consequence of the Bouquet Theorem  and its proof in \cite{Ti-bouquet}, namely for $\dim X\ge 2$ one has:
\begin{equation}\label{eq:decomp3}
  \widetilde  H_{*}(F_{f}) \cong \widetilde  H_{*}(\lk_X\{0\}) 
  \oplus
   \bigoplus_{V\in \sS\setminus \{0\}}   
   \widetilde H_{*}\left(\Sigma^{\dim V}(\lk_X V) \right)^{k_{V}},
\end{equation}
where:
\begin{equation}\label{eq:polar0}
k_{V} := \mult_{0}(\Gamma_{V}(l,f), f^{-1}(0)) - \mult_{0}(\Gamma_{V}(l,f), l^{-1}(0)),
\end{equation}
as shown  in \cite[\S 4.1]{Ti-bouquet}.

This  direct sum decomposition of the homology in which the exponents are precisely the polar multiplicities $k_{V}$ has been proved in \cite{Ti-bouquet} at the homotopy type level, which demands a much deeper study of the geometry of the monodromy of $f$. What Massey adds up to the (co)-homological picture in \cite{Ma} is the interpretation of the exponents $m_{V}$ in terms of the polar multiplicities $k_{V}$, namely the equality $m_{V}=k_{V}$, cf \cite[Theorem 3.2]{Ma}.

\begin{remark}\label{r:lessgeneral}
The proof of \eqref{eq:decomp3} and \eqref{eq:polar0} in \cite{Ti-bouquet}  uses a very general $l\in \Omega$. However the proof works for any $l\in \Omega'$, with minor changes taking into account the fact that $(l,f)_{| \Gamma (l,f)}$ is finite-to-one
and not one-to-one. The reason to consider $\Omega'$ instead of $\Omega$ is that the computation of intersection numbers may be easier
when working with $l\in \Omega'$, as we will see in Example \ref{ex:1}.
\end{remark}

\subsection{The Comparison Argument}\label{ss:comp}

 Let us first remark that the integers $m_{V}$ and $k_{V}$ do not depend on the chosen $l$ inside this Zariski-open set $\Omega'_{f}$, due to the arc-connectedness of this space.
We prove the equality $m_{V} = k_{V}$  for all positive dimensional strata $V \in \sS$  by induction on the dimension 
$n=\dim_{0} X \ge 2$ and on the number of strata of $\sS$ at $0$.  The case $\dim_{0} X \ge 1$ will be treated separately.
 
 \
 
Let $X$ have all irreducible components of dimension $\ge 2$. We first remark that if  $(X,0)$  is irreducible and $X\setminus \{0\}$ is non-singular in a neighborhood of $0$, then we have $X_{\reg} := X\setminus \{0\}$ as the single stratum outside $0$ in its neighborhood, and therefore the equality of numbers $m_{X_{\reg}} = k_{X_{\reg}}$ is clear in this case by comparing the two decompositions \eqref{eq:decomp2} and \eqref{eq:decomp3}. Let us also  point out that the complex link of the top stratum $X_{\reg}$ is the empty set, and so for $V= X_{\reg}$ we get $\widetilde H_{*}(\Sigma^{\dim X}(\lk_X X_{\reg}) \cong  \widetilde H_{*}(S^{\dim X -1})$.

We start the induction from the case $\dim_{0} X = 0$. 
 Then the statement of Theorem \ref{t:main1} is empty, so there is nothing to prove.
 Suppose that we have proved the equality for all space germs $(X,0)$ of dimension $\le n-1$, and we consider now a space $X$ with $\dim_{0} X = n$.  We consider the union $X_{n-1}$ of the strata at $0$ of dimension $\le n-1$. By the induction hypothesis, the equality of numbers for $X_{n-1}$ is proved stratwise. Then we take the union $X_{n-1}\cup V$ with one of the $n$-dimensional strata $V$ of $X$. Since $X_{n-1}\cup V$ is a closed analytic space germ at 0, we may write for it the two direct sum decompositions \eqref{eq:decomp2} and \eqref{eq:decomp3}, and we compare them. By cancelling out the contributions of $X_{n-1}$ (which are equal by the induction hypothesis), it follows that the equality $m_{V} = k_{V}$ must hold. 
  We then repeat the above described procedure by adding up one by one the other strata of dimension $n$. 
 The equality of numbers for each such stratum follows in the same manner.
 
 Let us observe that the above argument, i.e. comparing the direct sum decompositions \eqref{eq:decomp2} and \eqref{eq:decomp3},  also applies in the excepted case $\dim X = 1$, simply because the two decompositions still hold for the reduced homology in degree 0, namely:
 \[\widetilde H_{0}(F_{f}) \cong  \widetilde H_{0}(\lk_{X}\{0\}) 
  \oplus  \bigoplus_{V\in \mathscr S\setminus \{0\}} \widetilde H_{0}\left(\Sigma^{\dim V}(\lk_{X}V) \right)^{m_{V}}, 
  \]
  respectively:
\[
  \widetilde  H_{0}(F_{f}) \cong \widetilde  H_{0}(\lk_X\{0\}) 
  \oplus
   \bigoplus_{V\in \sS\setminus \{0\}}   
   \widetilde H_{0}\left(\Sigma^{\dim V}(\lk_X V) \right)^{k_{V}},
   \]
and remark that  in this case $\widetilde H_{0}\left(\Sigma^{\dim V}(\lk_X V) \right) \simeq \bZ$, and that $k_{V}$ still verifies the ``polar'' equality \eqref{eq:polar0}.

Thus, by comparing these two decompositions of $\widetilde  H_{0}(F_{f})$, we still get the equality  $m_{V}=k_{V}$.
 In this way we have also treated separately the case of a space $X$ of dimension 1, as promised in the very beginning.
 We have thus reached the end of our relatively short proof of Theorem \ref{t:main1}.
 \fin

\begin{remark}\label{r:morseindep}
The above Comparison Argument can be also applied to the direct sum decompositions of the homology of the Milnor fibre of $f$ coming from two different Morsifications of $f$, with Morse numbers $m_{V}$ and $m'_{V}$, respectively. Then the above inductive procedure shows directly
the equality $m_{V} = m'_{V}$ for each positive dimensional stratum $V\in \sS$, and consequently the independence 
of the set of  Morse pairs $M(f)$.
\end{remark}

\smallskip


 Let us  compute the set of Morse pairs $M(f)$ for the example considered in \cite{Za}. 
\begin{example}\label{ex:1}
 Let $X := \{ h = xz^{2} - y^{2} =0 \} \subset \bC^{3}$ be the well-known Whitney umbrella.
 One has two positive dimensional strata in the canonical stratification of $X$, namely the $x$-axis without the origin, denoted in the following by $V$, and the 2-dimensional stratum $W := X \setminus \overline{V}$. 
 Let $f := y^{2} -(x-z)^{2}$, viewed as function on $X$. We will consider the germs at $0\in \bC^{3}$ of all the involved objects.  We choose the linear function $l = x+2z$, as in \cite{Za}. 
 
 Since $V$ has dimension 1, the polar locus $\Gamma_{V}(l,f)$ is equal to $\overline{V}$ by definition. We have $f_{|V}= -x^{2}$ and $l_{|V}= x$,
 hence we get:
 \[\mult_{0}(\Gamma_{V}(l,f), f^{-1}(0)) - \mult_{0}(\Gamma_{V}(l,f), l^{-1}(0)) = \ord_{0}(-x^{2}) - \ord_{0}(x) = 2-1=1,
 \]
thus,  by Theorem \ref{t:main1}   we obtain $m_{V} = 1$.
 
 The polar locus $\Gamma_{W}(l,f)$ is found by first writing down the equations of the singular locus of $f$ on $X$, which is the Jacobian of the triple $(h,f, l)$, and then restricting it to $W$.  
 By computation  we find that this Jacobian is $4y(x-z)(3+z) =0$. It then follows that  $\Gamma_{W}(l,f)$ has the  two branches passing through the origin, namely: $\Gamma_{1} = \{ x=y=0\}$ and $\Gamma_{2} = \{ x=z, y^{2}= xz^{2}\}$.  
 
 At this moment we may
 remark that the linear function $l$ is not very general,  since it does not satisfy the last requirement 
 of Lemma \ref{l:genpolar}. Indeed, the restriction of $(l,f)$ to $\Gamma_{2}$ is two-to-one and not one-to-one. Nevertheless  $l\in \Omega'$, and this is just enough for our setting, see also  Remark \ref{r:lessgeneral}.
 
 We have $\Gamma_{2} = \{ x=z, y^{2}= xz^{2}\} = \{ x=z, y^{2}= x^{3}\}$, and 
by parametrising the cusp $y^{2}= x^{3}$  by $x=t^{2}, y=t^{3}$, we get $l_{|\Gamma_{2}} = 3t^{2}$ and 
$f_{|\Gamma_{2}} = t^{6}$. Thus:
 \[\mult_{0}(\Gamma_{2}, f^{-1}(0)) - \mult_{0}(\Gamma_{2}(l,f), l^{-1}(0)) = \ord_{0}(t^{6}) - \ord_{0}(3t^{2}) = 6-2=4.
 \]
The computation for $\Gamma_{1}$ is much easier since this polar branch is the $z$-axis:
 \[\mult_{0}(\Gamma_{1}, f^{-1}(0)) - \mult_{0}(\Gamma_{1}(l,f), l^{-1}(0)) = \ord_{0}(-z^{2}) - \ord_{0}(2z) = 2-1=1.
 \]
   Summing up the numbers over the two branches we get $4+1 =5$, which is our Morse number $m_{W}$, according to Theorem \ref{t:main1}.

We thus obtain the set of Morse pairs $M(f) = \{ (V,1), (W,5)\}$.  
\end{example}

\begin{remark}
In \cite[pages 937-939]{Za}, Zach explains how to compute  with his formula \cite[Thm. 1 and Cor. 4]{Za} the number $m_{W}=5$ for the top stratum $W$ in the above example,  with the aid of a computer 
using an algorithm implemented on the Singular software. By our method, the computation of $m_{W}$ is easily doable by hand.  However this facility is due to the nice factorisation of the Jacobian in this particular example. In general, it is much more difficult to compute even with our method, therefore an implementation of its algorithm on Singular could be of interest.
  \end{remark}


\bigskip


\begin{thebibliography}{RRRRU}


 \bibitem[BMPS]{BMPS} 
 J.-P.~Brasselet, D.~Massey, A.~J.~Parameswaran and J.~Seade, \emph{Euler obstruction and defects of functions on singular varieties}.  J. London Math. Soc. (2)  70 (2004), no. 1, 59-76.

\bibitem[Br]{Bri} E. Brieskorn.
 \emph{Die {M}onodromie der isolierten {S}ingularit\"{a}ten von
  {H}yperfl\"{a}chen}.  Manuscripta Math. (1970), no. 2, 103-161.
 

\bibitem[DHOST]{DHOST}
J.~Draisma, E.~Horobe\c{t}, G.~Ottaviani, B.~Sturmfels and R.~R. Thomas.
\emph{The {E}uclidean distance degree of an algebraic variety}.  Found. Comput. Math., 16 (2016), no. 1, 99-149.
 
 \bibitem[EG]{EG}
W. Ebeling and S. Gusein-Zade, {\em Indices of 1-forms on 
an isolated complete intersection singularity}, Moscow. Math. J. {\bf 3}, no. 2
(2003), 439--455.
 

\bibitem[GSV]{GSV}
X. G\'omez-Mont, J. Seade and A. Verjovsky, {\em The index of a
holomorphic flow with an isolated singularity}, Math. Ann. {\bf 291}
(1991), 737-751.

\bibitem[GM]{GM}
M.~Goresky and R.~MacPherson. 
 Stratified Morse theory.
 Ergebnisse der Mathematik und ihrer Grenzgebiete. 3. Folge, Bd. 14. Berlin Springer-Verlag 1988.
 
  \bibitem[Go]{Go}
 V. Goryunov, {\em Functions on space curves}, J. London
 Math. Soc. {\bf 61} (2000), 807-822.

 \bibitem[HL]{HL}
H. Hamm,  L\^e D.T.,  \emph{Un th\'eor\`eme de Zariski du type de Lefschetz}. Ann. Sci. \'Ecole Norm. Sup. (4) 6 (1973), 317-355.

\bibitem[IS]{IS}
T. Izawa, T. Suwa, {\em Multiplicity of functions on singular
varieties}, Internat. J. Math. {\bf 14}, 5 (2003), 541-558. 

\bibitem[Kl]{Kl} 
S.~Kleiman.
  \emph{The transversality of a general translate}.
 Compositio Math. 28 (1974), 287-297.
 
  \bibitem[L\^ e1]{Le-Dmod}
L\^ e D.T., {\em Morsification of D-modules}, preprint 1988. Published in:  Bol. Soc. Mat. Mexicana (3) 4 (1998), no. 2, 229-248. 
 
 \bibitem[L\^ e2]{Le-isol}
L\^ e D.T., {\em Complex analytic functions with isolated
singularities }, J. Algebraic Geom.  1 (1992),  83--100.

\bibitem[LSS]{LSS}
 D. Lehmann, M. Soares and T. Suwa,
{\em On the index of a holomorphic vector field tangent to a singular variety},
 Bull. Braz. Math. Soc.  26 (1995),  183-199.

\bibitem[Ma]{Ma}
D. Massey,  \emph{Hypercohomology of Milnor fibres}.  Topology, 35 (1996), no. 4, 969-1003.

\bibitem[MRW1]{MRW2018}
L.~G. Maxim, J.~I. Rodriguez and B.~Wang.
\emph{Euclidean {D}istance {D}egree of the {M}ultiview {V}ariety}.
SIAM J. Appl. Algebra Geom. 4 (2020), 28-48.

\bibitem[MRW2]{MRW5}
L.~G. Maxim, J.~I. Rodriguez and B.~Wang.
\emph{A Morse theoretic approach to non-isolated singularities and applications to optimization}.
J. Pure Appl. Algebra 226 (2022), no. 3, paper 106865.

\bibitem[MT]{MT} L. Maxim, M.~Tib\u{a}r,  \emph{Euclidean distance degree and limit points in a Morsification}.  
 arXiv:2210.06022 \	
doi: 10.48550/arXiv.2210.06022 .



\bibitem[Mi]{Mi}
J. W.~Milnor,  Singular points of complex hypersurfaces.  Annals of Mathematics Studies, No. 61, Princeton University Press, Princeton, N.J.; University of Tokyo Press, Tokyo,1968.

\bibitem[MS]{MS}
D. Mond and D. Van Straten, {\em Milnor number equals Tjurina number
for functions on space curves}, J. London Math. Soc. {\bf 63} (2001),
177-187.

\bibitem[STV]{STV2} J. Seade, M. Tib\u ar, A. Verjovsky,  \emph{Milnor numbers and Euler obstruction}, Bull.
 Braz. Math. Soc. 36(2) (2005), 275-283.


\bibitem[Si]{Si}
D.~Siersma.    \emph{A bouquet theorem for the Milnor fibre}.   
J. Algebraic Geom. 4 (1995), no. 1, 51-66.


\bibitem[Ti1]{Ti-bouquet}
M.~Tib\u{a}r.   
\emph{Bouquet decomposition of the Milnor fibre},  Topology 35 (1996), no. 1,  227-242.
 
\bibitem[Ti2]{Ti-compo}
 M. Tib\u ar, {\em Topology at infinity of polynomial mappings and Thom
regularity condition}, Compositio Math.  111  (1998),  no. 1, 89-109.

\bibitem[Ti3]{Ti-book}
M.~Tib\u{a}r.  Polynomials and vanishing cycles.
Cambridge Tracts in Mathematics, vol. 170.  Cambridge University Press, Cambridge, 2007.


\bibitem[Za]{Za}
  M. Zach, {\em A generalisation of Milnor's formula}, Math. Ann. 382 (2022), 901-942.
  
  
\end{thebibliography}
\end{document}